\begin{document}
\selectfont
\define\div{\big|}
\define\ndiv{\not\div}
\define\zetabar{\ovserline{\zeta}}
\define\ahat{\hat{a}}
\define\bhat{\hat{b}}
\define\chat{\hat{c}}
\define\dhat{\hat{d}}
\define\phat{\hat{p}}
\define\zhat{\hat{0}}
\define\vhat{\hat{v}}
\define\xhat{\hat{x}}
\define\lG{\lambda(G)}
\define\zetabar{\overline{\zeta}}
\define\half{\mathfrak{h}}
\def\keywords#1{\def\@keywords{#1}}
\parskip=0.125in
\title{Hopf-Galois Structures Arising From Groups with Unique Subgroup of Order p}
\date{}
\author{Timothy Kohl\\
Department of Mathematics and Statistics\\
Boston University\\
Boston, MA 02215\\
tkohl@math.bu.edu}
\maketitle
\begin{abstract}
For $\Gamma$ a group of order $mp$ for $p$ prime where $gcd(p,m)=1$, we consider those regular subgroups $N\leq Perm(\Gamma)$ normalized by $\lG$, the left regular representation of $\Gamma$. These subgroups are in one-to-one correspondence with the Hopf-Galois structures on separable field extensions $L/K$ with $\Gamma=Gal(L/K)$. This is a follow up to the author's earlier work where, by assuming $p>m$, one has that all such $N$ lie within the normalizer of the $p$-Sylow subgroup of $\lambda(\Gamma)$. Here we show that one only need assume that all groups of a given order $mp$ have a unique $p$-Sylow subgroup, and that $p$ not be a divisor of the automorphism groups of any group of order $m$. As such, we extend the applicability of the program for computing these regular subgroups $N$ and concordantly the corresponding Hopf-Galois structures on separable extensions of degree $mp$.
\end{abstract}
\noindent {\it Key words:} Hopf Galois extension, regular subgroup \par
\noindent {\it MSC:} 20B35, 20D20, 20D45, 16T05\par
\renewcommand{\thefootnote}{}
\centerline{Introduction}
The motivation and antecedents for this lie in the subject of Hopf Galois theory for separable field extensions. In particular this note is about extending the recent work \cite{Kohl2013} by the author on Hopf Galois structures on Galois extensions of degree $mp$ for $p$ a prime where $p>m$. We will not delve into all the particulars of Hopf Galois theory, since this discussion focuses on the group theoretic underpinnings of this class of examples. For the general theory, one may consult references such as \cite{ChaseSweedler1969} (for the basic definitions and initial examples) and \cite{GreitherPareigis1987} for the theory as applied to separable extensions which is the category in which \cite{Kohl2013} and others fall.  In brief, let $L/K$ be a finite Galois extension with $\Gamma=Gal(L/K)$. Such an extension is canonically Hopf Galois for the $K$-Hopf algebra $H=K[\Gamma]$, but also for potentially many other $K$-Hopf algebras. The enumeration of these is determined by the following variant of the main theorem in Greither and Pareigis which we paraphrase here:\par
\noindent{\bf Theorem: }\cite{GreitherPareigis1987}
If $L/K$ is a finite Galois extension with $\Gamma=Gal(L/K)$ then the Hopf algebras which act to make the extension Hopf Galois correspond in a one to one fashion with the regular (transitive and fixed point free) subgroups $N$ of $B=Perm(\Gamma)$ such that $\lambda(\Gamma)\leq Norm_B(N)$.\par
Each such $N$ gives rise to the Hopf algebra $H=L[N]^{\Gamma}$ the fixed ring of the group ring $L[N]$ under the action of $\Gamma$ simultaneously on the coefficients, by virtue of $\Gamma=Gal(L/K)$, and the group elements by virtue of $\lambda(\Gamma)$ normalizing $N$. The problem of enumerating such $N$ for different classes of extensions has been the subject of much recent work by Byott (e.g. \cite{Byott2004}), Childs (e.g. \cite{Childs2003}), the author and others.\par
In order to organize the enumeration of the possible $N$ that may arise for a given $\Gamma$, one considers, for $[M]$ the isomorphism class of a given group of the same order as $\Gamma$, the set:
$$R(\Gamma,[M])=\{N\leq B\ |\ N\ regular, N\cong M , \lambda(\Gamma)\leq Norm_B(N)\}$$
and let $R(\Gamma)$ be the union of the $R(\Gamma,[M])$ over {\it all} isomorphism classes of groups of the same order as $\Gamma$. The totality of all of these give all possible Hopf Galois structures that exist on the extension $L/K$.\par
Again, in the cases considered in \cite{Kohl2013} it was assumed that $|\Gamma|=mp$ for $p$ prime where $p>m$. Our goal is to extend those results to groups of order $mp$ where $gcd(p,m)=1$, but where one need not assume that $p>m$.
\par
\section{Preliminaries}
We begin by briefly reviewing the setup in \cite{Kohl2013} where the author considered groups $\Gamma$ of order $mp$ with a unique and therefore characteristic $p$-Sylow subgroup due to the assumption that $p>m$. Since $p>m$ obviously implies $gcd(p,m)=1$ then by the Schur-Zassenhaus lemma, $\Gamma$ may be written as $PQ$ for $P$ and $Q$ subgroups of $\Gamma$ where $|P|=p$ and $|Q|=m$. More specifically there is a split exact sequence $P \rightarrow \Gamma\rightarrow Q$ whereby $\Gamma=P\rtimes_{\tau}Q$ with $\tau:Q\rightarrow Aut(P)$ is induced by conjugation within $\Gamma$ by the complementary subgroup $Q$. Using $Q$ for the quotient of $\Gamma$ by $P$ and the image of the section in $\Gamma$ is admittedly a slight abuse of notation. The condition $p>m$ is sufficient, of course, to make the $p$-Sylow subgroup unique and have order $p$. Going forward, we wish to drop the assumption that $p>m$ and consider groups of order $mp$ for $p$ prime, with $gcd(p,m)=1$ and where congruence conditions force {\it any} group of order $mp$ to have a unique $p$-Sylow subgroup.\par
\noindent If $\Gamma$ has order $mp$ for $gcd(p,m)=1$ and has a unique $p$-Sylow subgroup then if $\lambda:\Gamma\rightarrow Perm(\Gamma)=B$ is the left regular representation then we define $\mathcal{P}=P(\lambda(\Gamma))$ to be the $p$-Sylow subgroup of $\lambda(\Gamma)$ and $\mathcal{Q}$ to be the complementary subgroup to $\mathcal{P}$ in $\lambda(\Gamma)$. For $p>m$, the program in sections 1-3 of \cite{Kohl2013} is based on the following core result:
\begin{theorem}\cite[Theorem 3.5]{Kohl2013}
\label{main}
For $\Gamma$ of order $mp$ with $p>m$, if $N\in R(\Gamma)$ then $N$ is a subgroup of $Norm_B(\mathcal{P})$.
\end{theorem}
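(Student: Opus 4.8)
The plan is to relabel so that everything lives inside the holomorph of $N$, and then to play off the characteristic Sylow $p$-subgroups that $p>m$ forces on both $N$ and $\Gamma$. The single use of $p>m$ is this: for any group $G$ of order $mp$, $n_p(G)\equiv 1\pmod p$ and $n_p(G)\mid m<p$ force $n_p(G)=1$, so $G$ has a unique — hence characteristic — subgroup $P(G)$ of order $p$, and it is cyclic. Thus $\mathcal P=P(\lambda(\Gamma))\trianglelefteq\lambda(\Gamma)$, while $P(N)$ is characteristic in $N$, so $\lambda(\Gamma)$ already normalizes $P(N)$; if $P(N)=\mathcal P$ there is nothing to prove, so this is really a statement about the case $P(N)\ne\mathcal P$.

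Relabel the set $\Gamma$ by the regular action of $N$: this identifies $B$ with $Perm(N)$, carries $N$ to the left regular representation $\lambda_N(N)$, carries $\mathcal P$ to the Sylow $p$-subgroup $P(\Gamma')$ of a regular subgroup $\Gamma'\cong\Gamma$ of $Norm_{Perm(N)}(\lambda_N(N))=Hol(N)=\lambda_N(N)\rtimes Aut(N)$, and carries $P(N)$ to $\lambda_N(P(N))$, which is characteristic in $\lambda_N(N)\trianglelefteq Hol(N)$ and so normal in $Hol(N)$. The assertion becomes: $\lambda_N(N)$ normalizes $P(\Gamma')$. Let $\pi\colon Hol(N)\to Aut(N)$ be the projection killing $\lambda_N(N)$. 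If $\pi(P(\Gamma'))=1$ then $P(\Gamma')\le\lambda_N(N)$ has order $p$ and so equals $\lambda_N(P(N))$ — done. So suppose $\pi(P(\Gamma'))\ne1$; with $P(\Gamma')=\langle\sigma\rangle$, the automorphism $\theta=\pi(\sigma)$ has order $p$. It is trivial on the characteristic $P(N)\cong C_p$ (since $p\nmid|Aut(C_p)|$) and trivial on $N/P(N)$ (since $|Aut(N/P(N))|\mid(m-1)!$, prime to $p$); and because $\gcd(m,p)=1$ gives $H^1(N/P(N),P(N))=0$, every automorphism trivial on $P(N)$ and on $N/P(N)$ is conjugation by an element of $P(N)$. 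So $\theta$ is conjugation by some $s_0\in P(N)\setminus Z(N)$; writing $\rho_N$ for the right regular representation (which lies in $Hol(N)$), we get $\sigma=\lambda_N(a)\theta=\lambda_N(as_0)\rho_N(s_0)$, and $\sigma^p=\mathrm{id}$ forces $as_0\in P(N)$. Hence, for a generator $s$ of $P(N)$, $\sigma=\lambda_N(s^{\,i})\rho_N(s^{\,j})$ with $j\not\equiv0\pmod p$.

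To close, set $\mathcal T=\lambda_N(P(N))\rho_N(P(N))$; one checks $\mathcal T\cong C_p\times C_p$ and $\mathcal T\trianglelefteq Hol(N)$ (using that $P(N)$ is characteristic in $N$, that $\lambda_N(N)$ centralizes $\rho_N(N)$, and that $Aut(N)$ stabilizes each of $\lambda_N(P(N))$, $\rho_N(P(N))$), and — the crux — that in the basis $\{\lambda_N(s),\rho_N(s)\}$ the conjugation action of $Hol(N)$ on $\mathcal T$ is by \emph{diagonal} matrices: $\lambda_N(N)$ acts as $\mathrm{diag}(\mu(\cdot),1)$ via the conjugation character $\mu\colon N\to Aut(P(N))$, and $Aut(N)$ acts by scalars. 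Now $\sigma$ is the vector $(i,j)$ with $j\ne0$, the line $P(\Gamma')=\langle\sigma\rangle$ lies in $\mathcal T$, and $\Gamma'\le Hol(N)$ normalizes $P(\Gamma')$ while acting diagonally on $\mathcal T$; but a diagonal matrix fixing the line $\langle(i,j)\rangle$ with $j\ne0$ is scalar unless $i\equiv0$. If $i\equiv0$ then $P(\Gamma')=\rho_N(P(N))$, which $\lambda_N(N)$ even centralizes — done. Otherwise every $\gamma\in\Gamma'$ acts on $\mathcal T$ by a scalar, so its $\lambda_N$-component lies in $C_N(P(N))$, i.e. $\Gamma'\le\lambda_N(C_N(P(N)))\rtimes Aut(N)$; but the orbit of $1\in N$ under the latter group is only $C_N(P(N))$, a proper subset of $N$ since $s_0\notin Z(N)$, contradicting transitivity of the regular group $\Gamma'$. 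Thus this case is vacuous, $\lambda_N(N)$ normalizes $P(\Gamma')$ in every case, and translating back, $N\le Norm_B(\mathcal P)$.

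The step I expect to be the main obstacle is this last one. Sylow counting only delivers the rank-two group $\mathcal T$ containing both candidate Sylow $p$-subgroups, and one must rule out $\Gamma'$ acting on $\mathcal T$ through a representation with an invariant line other than the two ``axes''. This is precisely where transitivity of $N$ — as opposed to the mere normalization hypothesis $\lambda(\Gamma)\le Norm_B(N)$ — is indispensable, and where $|P(N)|=p$ is essential, since it is what makes the $Hol(N)$-action on $\mathcal T$ diagonal rather than an arbitrary subgroup of $GL_2(p)$. The degenerate situation $P(N)\le Z(N)$ (where only $\pi(P(\Gamma'))=1$ arises) and very small primes should be dispatched routinely in passing.
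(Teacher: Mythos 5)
Your argument is correct, and it reaches \ref{main} by a genuinely different route than the paper does. The paper (following \cite[Sections 1--3]{Kohl2013}) works entirely inside $Perm(\Gamma)$: it first forces $P(N)$ into $\mathcal{V}=\langle\pi_1,\dots,\pi_m\rangle$ (the content of Proposition \ref{semiregular}), then pins down the possible $P(N)$ via linear characters of $\mathcal{Q}$, and only then concludes $N\leq Norm_B(\mathcal{P})$ using the explicit description $Norm_B(\mathcal{P})=\mathcal{V}\mathcal{U}\mathcal{S}\cong C_p^m\rtimes(Aut(C_p)\times S_m)$, with the $p$-torsion of automorphism groups (Lemma \ref{Aut}) as an ingredient. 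You instead relabel along the regular action of $N$ so that everything happens in $Hol(N)=\lambda_N(N)\rtimes Aut(N)$, where the assertion becomes that $\lambda_N(N)$ normalizes the Sylow $p$-subgroup of the transported copy of $\lambda(\Gamma)$; your two inputs are (i) an order-$p$ automorphism of $N$ is trivial on $P(N)$ and on $N/P(N)$, hence by vanishing of $H^1(N/P(N),P(N))$ is inner by an element of $P(N)$ --- this is in substance Lemma \ref{Aut} applied to $N$ rather than to $\Gamma$ --- and (ii) the observation that $Hol(N)$ acts on $\mathcal{T}=\lambda_N(P(N))\rho_N(P(N))\cong\mathbb{F}_p\times\mathbb{F}_p$ by diagonal matrices in the basis $\{\lambda_N(s),\rho_N(s)\}$, so the only lines that the transported $\Gamma$ can stabilize are the two axes, the skew lines being killed by transitivity (the orbit of $1$ would be trapped in $C_N(P(N))\subsetneq N$). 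I checked the individual steps --- $Norm_{Perm(N)}(\lambda_N(N))=Hol(N)$, the reduction $\sigma=\lambda_N(s^i)\rho_N(s^j)$ with $j\neq 0$ from $\sigma^p=1$ and uniqueness of the Sylow $p$-subgroup of $N$, the diagonal action, and the degenerate cases $P(\Gamma')\leq\lambda_N(N)$, $i=0$, and $P(N)\leq Z(N)$ --- and they hold. As for what each approach buys: the paper's route produces, along the way, the explicit parametrization of the candidates for $P(N)$ that the subsequent enumeration program actually requires, whereas your proof is shorter and self-contained, and it uses $p>m$ only through uniqueness of the $p$-Sylow subgroup in every group of order $mp$ and through $p\nmid(m-1)!$; replacing the latter by the hypothesis $p\nmid|Aut(Q)|$ for groups $Q$ of order $m$ shows your argument transfers essentially verbatim to the generalized setting $(p,m)\in F_Q\cap F_S$ that is the point of the present paper.
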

\noindent To extend this to groups of order $mp$ with $gcd(p,m)=1$, we need to modify certain key results from \cite{Kohl2013}, starting with \cite[Lemma 1.1]{Kohl2013} regarding the $p$-torsion of $Aut(\Gamma)$. 
\begin{lemma}
\label{Aut}
Let $\Gamma$ have order $mp$ where $gcd(p,m)=1$ which has a unique $p$ Sylow subgroup of order $p$, where $\Gamma\cong P\rtimes_{\tau}Q$ as above. Then\par
\noindent (a) If $\tau$ is trivial (whence $\Gamma\cong P\times Q$) then $p$ does not divide $|Aut(\Gamma)|$.\par
\noindent (b) If $\tau$ is non-trivial then $Aut(\Gamma)$ has a unique $p$-Sylow subgroup, consisting of inner automorphism induced by conjugation by elements of $P$, \par
\noindent provided that $p$ does not divide $|Aut(Q)|$.\par
\end{lemma}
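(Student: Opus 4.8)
The plan is to exploit the one structural fact available throughout, namely that $P$ is the \emph{unique} $p$-Sylow subgroup of $\Gamma$ and hence characteristic, so every $\phi\in Aut(\Gamma)$ restricts to an automorphism of $P$; write $\rho\colon Aut(\Gamma)\to Aut(P)$ for this restriction, and recall $|Aut(P)|=p-1$ is prime to $p$. Part (a) is then essentially immediate: when $\tau$ is trivial we have $\Gamma\cong P\times Q$ with $\gcd(|P|,|Q|)=1$, so $Q$ (the normal Hall $p'$-subgroup) is likewise unique and characteristic, and since $\operatorname{Hom}(P,Q)$ and $\operatorname{Hom}(Q,P)$ are trivial there is no off-diagonal mixing, giving $Aut(\Gamma)\cong Aut(P)\times Aut(Q)$ of order $(p-1)|Aut(Q)|$. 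By the standing hypothesis $p\nmid|Aut(Q)|$, so $p\nmid|Aut(\Gamma)|$.

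For part (b), first I would pin down the candidate Sylow subgroup. If $\tau$ is non-trivial, choose $q\in Q$ whose conjugation action $\iota_q$ on $P$ is non-trivial; since $|P|=p$ is prime, any nontrivial element of $P$ generates it, so a nonzero fixed point of $\iota_q$ would force $\iota_q$ to be trivial on $P$, a contradiction. Hence $Z(\Gamma)\cap P=1$, so conjugation embeds $P$ into $Inn(\Gamma)\leq Aut(\Gamma)$ as a subgroup $\overline{P}$ of order $p$. This $\overline{P}$ is normal in $Aut(\Gamma)$ because $\psi\,\iota_x\,\psi^{-1}=\iota_{\psi(x)}$ with $\psi(x)\in\psi(P)=P$ for every $\psi\in Aut(\Gamma)$.

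The heart of the argument is then to establish a semidirect decomposition $Aut(\Gamma)=\overline{P}\rtimes A$, where $A$ is the stabilizer in $Aut(\Gamma)$ of the pair $(P,Q)$. Given $\phi\in Aut(\Gamma)$, we have $\phi(P)=P$ while $\phi(Q)$ is a complement to $P$ in $\Gamma$; by the Schur--Zassenhaus conjugacy of complements (applicable since $P$ is cyclic, hence solvable), together with the observation that any conjugator between complements to $P$ may be replaced by an element of $P$, we get $\phi(Q)=xQx^{-1}$ for some $x\in P$, so $\iota_x^{-1}\circ\phi\in A$ and $\phi\in\overline{P}A$. Next $\overline{P}\cap A=1$: if $\iota_x$ fixes $Q$ then $x\in Norm_\Gamma(Q)\cap P$, and a short commutator computation using $P\cap Q=1$ shows $x$ centralizes $Q$, hence $x\in Z(\Gamma)\cap P=1$. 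Thus $Aut(\Gamma)=\overline{P}\rtimes A$. Finally, restriction to $P$ and to $Q$ injects $A$ into $Aut(P)\times Aut(Q)$ (an element of $A$ is determined by its effect on $P$ and on $Q$ since $\Gamma=PQ$ setwise), a group of order $(p-1)|Aut(Q)|$, prime to $p$ by hypothesis. Hence $p\nmid|A|$, so $\overline{P}$ is a normal $p$-Sylow subgroup of $Aut(\Gamma)$ --- necessarily the unique one --- and by construction it consists exactly of the inner automorphisms induced by conjugation by elements of $P$.

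The main obstacle I anticipate is the middle step of (b): obtaining the clean decomposition $Aut(\Gamma)=\overline{P}\rtimes A$. It relies on marrying characteristicity of $P$ with the finer point that all complements to $P$ are conjugate \emph{by an element of $P$}, and on the identity $Norm_\Gamma(Q)\cap P\subseteq Z(\Gamma)\cap P=1$; once these are in place the order bookkeeping is routine, and part (a) is a triviality. It is worth noting that the displayed hypothesis $p\nmid|Aut(Q)|$ is used in both parts and is precisely what blocks spurious $p$-torsion from entering $Aut(\Gamma)$ through the $Q$-factor.
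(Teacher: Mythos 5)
Your proof is correct, and for part (b) it takes a genuinely different route from the paper. The paper argues automorphism-by-automorphism: a $\psi\in Aut(\Gamma)$ of $p$-power order induces $\bar{\psi}\in Aut(\Gamma/P)\cong Aut(Q)$, which is trivial since $p\nmid|Aut(Q)|$ (and $\psi|_P$ is trivial since $|Aut(P)|=p-1$), and then the identification of all such $p$-power automorphisms with conjugation by elements of $P$ is delegated to the observation in \cite{Kohl2013} that $|P\cap Z(\Gamma)|=1$ and inner automorphisms from $P$ supply the order-$p$ subgroup; the residual step (a $p$-power automorphism acting trivially on $P$ and on $\Gamma/P$ must be inner by $P$) is inherited from the earlier paper rather than reproved. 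You instead prove a global structural statement: $Aut(\Gamma)=\overline{P}\rtimes A$ where $\overline{P}$ is the image of $P$ in $Inn(\Gamma)$ (faithful because $Z(\Gamma)\cap P=1$ when $\tau\neq 1$, and normal because $P$ is characteristic) and $A$ is the joint setwise stabilizer of $P$ and $Q$; the surjectivity $Aut(\Gamma)=\overline{P}A$ comes from Schur--Zassenhaus conjugacy of complements together with the remark that the conjugating element can be taken in $P$ (write $g=xq$), the triviality of $\overline{P}\cap A$ from the commutator argument $[x,Q]\subseteq P\cap Q=1$, and $p\nmid|A|$ from the embedding $A\hookrightarrow Aut(P)\times Aut(Q)$, where the hypothesis $p\nmid|Aut(Q)|$ enters. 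Your version is self-contained and yields slightly more (an explicit semidirect decomposition of $Aut(\Gamma)$ from which existence, normality, and uniqueness of the $p$-Sylow subgroup all fall out at once), at the cost of invoking conjugacy of complements; the paper's version is shorter because it can lean on \cite{Kohl2013}, and it displays most transparently exactly where the hypothesis $p\nmid|Aut(Q)|$ is used. Part (a) is handled identically in both.
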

\begin{proof}
In (a), if $\Gamma$ is such a direct product then $Aut(\Gamma)=Aut(P)\times Aut(Q)$ and so if $p\nmid |Aut(Q)|$ then $p\nmid |Aut(\Gamma)|$. The proof is basically the same as in \cite{Kohl2013}. For (b), since $\Gamma/P\cong Q$ then $\psi\in Aut(\Gamma)$ induces $\bar{\psi}\in Aut(\Gamma/P)\cong Aut(Q)$ and if $\psi$ has $p$-power order then $\bar{\psi}$ is trivial if $p$ does not divide $|Aut(Q)|$. And, as also observed in \cite{Kohl2013}, when $\Gamma$ is not a direct product, then $|P\cap Z(\Gamma)|=1$ and conjugation in $\Gamma$ by elements of $P$ yields the order $p$ subgroup of $Aut(\Gamma)$.
\end{proof}
Note, the condition that $p\nmid |Aut(Q)|$ was automatic when $p>m$, but, as it turns out, this holds true fairly often even when $p<m$. For example, if $p=5$ and $m=8$ then Sylow theory easily shows that any group of order 40 will have a unique $p$-Sylow subgroup. Moreover one may consider each group of order 8, $\{C_8,C_4\times C_2,C_2\times C_2\times C_2,D_4,Q_2\}$ whose automorphism groups have orders $\{4,8,168,8,24\}$ respectively, {\it none} of which are divisible by $5$.\par

\noindent As regularity is so essential to this discussion, we define it here:
\begin{definition}
If $X$ is a finite set and $B=Perm(X)$ then a $N\leq B$ is regular if any two of the following properties hold
\begin{enumerate}
\item $N$ acts transitively on $X$
\item $\eta(x)=x$ for $\eta\in N$ only if $\eta=e_N$, the identity element of $N$
\item $|N|=|X|$.
\end{enumerate}
\end{definition}
\noindent Property (2) above is particularly important for understanding the cycle structure of elements in a regular permutation group. In particular, any non-identity element of a regular permutation group must be a product of cycles of the same length where the sum of the lengths must equal $|X|$. For example, if $X=\{1,2,3,4,5,6\}$ then $(1,2)(3,4)(5,6)$ and $(1,2,3)(4,5,6)$ satisfy this property. In contrast, $\mu=(1,2,3,4)(5,6)$ does not belong to any regular subgroup of $Perm(X)$ even though it acts fixed point freely, the difference being that $\mu^2=(1,3)(2,4)$ which has fixed points. i.e. If $\eta$ belongs to a regular permutation group then not only must $\eta$ act fixed point freely, but also any non-trivial power of $\eta$. A subgroup of $B$ which satisfies condition (2) by itself is termed {\it semiregular} (where of course any semiregular subgroup of size equal to $|X|$ is regular). Moreover, any subgroup of a regular permutation group is semiregular, in particular $\mathcal{P}\leq\lambda(\Gamma)$. As such, it must be generated by an element $\pi=\pi_1\pi_2\cdots\pi_m$ where the $\pi_i$ are disjoint $p$-cycles. In a similar fashion, if $N$ is any regular subgroup of $B$ then it's $p$-Sylow subgroup $P(N)$ is cyclic of order $p$ and similarly generated by a product of $m$ $p$-cycles. For those regular $N\leq B$ corresponding to Hopf-Galois structures where the {\it underlying} group is $\Gamma$ we are looking at those which are normalized by $\lambda(\Gamma)$.

The $p>m$ assumption was used in \cite{Kohl2013} to show that for any such $N$, that $P(N)=\langle\pi_1^{a_1}\cdots\pi_m^{a_m}\rangle$ where each $a_i\in U_p=\mathbb{F}_p^{\times}$. This was due to the observation \cite[Prop 1.2]{Kohl2013} that if $p>m$ then $P(N)=\langle\theta\rangle$ and $\mathcal{P}=\langle\pi\rangle$ must centralize each other forcing $\theta\pi_i\theta^{-1}=\pi_i$ (because for $p>m$ the group $S_m$ contains no elements of order $p$) and consequently that $\theta$ is a product of the same $\pi_i$ (that comprise the generator of $\mathcal{P})$ to non-zero powers.\par
\noindent As it turns out, this is {\it not} automatically true if it's merely assumed that $gcd(p,m)=1$. For example, if $p$=5 and $m$=8 then in $S_{40}$ let 
$$\pi_i=(1+(i-1)5,2+(i-1)5,3+(i-1)5,4+(i-1)5,5+(i-1)5)$$
for $i=1,\dots,8$ and let $\theta_j=(j,j+5,j+10,j+15,j+20)$ for $j=1,\dots,5$ and $\theta_6=\pi_6$, $\theta_7=\pi_7$, $\theta_8=\pi_8$. One may verify that $\pi=\pi_1\cdots\pi_8$ is centralized by $\theta=\theta_1\cdots\theta_8$ but for $j=1,\dots,5$ that $\theta_j$ is not a power of any $\pi_i$.\par
This example shows that the $P(N)\leq N$ being normalized, and thus centralized, by $\mathcal{P}$ is insufficient to guarantee that $P(N)\leq \mathcal{V}=\langle \pi_1,\pi_2,\dots,\pi_m\rangle$. This does not nullify the possibility of the program in \cite{Kohl2013} being generalized to the case $gcd(p,m)=1$. This example merely shows that $Cent_B(\mathcal{P})$ contains many semi-regular subgroups of order $p$ that are not subgroups of $\mathcal{V}$. However, it turns out that for those $N$ normalized by $\lambda(\Gamma)$ that (since $P(N)$ is characteristic in $N$ and therefore normalized by $\lambda(\Gamma)$) the possibilities for $P(N)$ are still restricted to being contained in $\mathcal{V}$. To arrive at this we need to recall some facts about the structure of $Norm_B(\mathcal{P})$ and $Cent_B(\mathcal{P})$.\par
Since $\mathcal{P}=\langle\pi\rangle=\langle \pi_1\cdots\pi_m\rangle$ where the $\pi_i$ are disjoint $p$-cycles then we may choose $\gamma_i\in\Gamma$ for $i=1,\dots,m$ such that $\pi_i=(\gamma_i,\pi(\gamma_i),\dots,\pi^{p-1}(\gamma_i))$ and if we let $\Pi_i=Supp(\pi_i)$ the support of $\pi_i$ then the $\Pi_i$ are, of course, disjoint and their union is $\Gamma$ as a set. With this analysis of the cycle structure of $\pi$ in place, we pause to give the following definition for understanding the factorization of similar fixed point free elements of order $p$ in $B$.
\begin{definition}
For $\theta\in B$ and $\pi_i$ as above, we say $\pi_i$ divides $\theta$ denoted $\pi_i | \theta$ if the cycle structure of $\theta$ contains some non-trivial power of $\pi_i$. Similarly we write $\pi_i\nmid\theta$ if no power of $\pi_i$ is a factor in the cycle structure of $\theta$.
\end{definition}
\noindent Observe that if $\pi_i|\theta$ then $\pi_i|\theta^e$ for any $e\in U_p$.\par
We recall certain facts about $Cent_B(\mathcal{P})$ and $Norm_B(\mathcal{P})$ as given in section 3 of \cite{Kohl2013}. Define $\mathcal{S}\leq B$ to be those permutations $\overline{\alpha}$ such that for each $i\in\{1,\dots,m\}$ there exists a single $j\in\{1,\dots,m\}$ such that $\overline{\alpha}(\pi^t(\gamma_i))=\pi^t(\gamma_j)$ for each $t\in\mathbb{Z}_p$. Equivalently $\overline{\alpha}$ operates on the {\it blocks} $\Pi_i$ as follows $\overline{\alpha}(\{\gamma_i,\pi(\gamma_i),\dots,\pi^{p-1}(\gamma_i)\})=\{\gamma_j,\pi(\gamma_j),\dots,\pi^{p-1}(\gamma_j)\})$. It is clear that $\mathcal{S}$ is isomorphic to $S_m$ viewed as $Perm(\{\Pi_1,\dots,\Pi_m\})$ where $\overline{\alpha}\in\mathcal{S}$ corresponds to a permutation $\alpha\in S_m$ which permutes the $m$ blocks $\Pi_i$ amongst each other. In a similar fashion we may define another subgroup $\mathcal{U}\leq B$ keyed to $\pi$ and the $\pi_i$. For a cyclic group $C=\langle x\rangle$ the automorphisms are given by $x\mapsto x^c$ for $c\in U_p=\mathbb{F}_p^{\times}=\langle u\rangle$. Within $B$ therefore, since $\mathcal{P}$ is cyclic of order $p$ there exists $u_1\cdots u_m$ a product of $m$ disjoint $p-1$ cycles with the property that $u_i\pi_i u_i^{-1}=\pi_i^{u}$ so that $(u_1\cdots u_m)\pi(u_1\cdots u_m)^{-1}=\pi^{u}$ and we define $\mathcal{U}=\langle u_1\cdots u_m\rangle$. With this in mind, we may easily describe $Cent_B(\mathcal{P})$ and $Norm_B(\mathcal{P})$ (as in \cite{Kohl2013}) specifically, if $C_p$ denotes the cyclic group order $p$ and $S_m$ then
\begin{align*}
Cent_B(\mathcal{P}) &= \mathcal{V}\mathcal{S}\cong C_p\wr S_m\cong C_p^m \rtimes S_m \\ 
Norm_B(\mathcal{P}) &= \mathcal{V}\mathcal{U}\mathcal{S}\cong C_p^m \rtimes (Aut(C_p) \times S_m) \\
\end{align*}
The semi-direct product formulation is useful and may be closely connected to the intrinsic (as a subgroup of $B$) description. We may view $\mathcal{V}=\langle\pi_1,\dots,\pi_m\rangle$ naturally as $C_p^m$ but also, more perspicaciously, as $\mathbb{F}_p^m$, the dimension $m$ vector space over $\mathbb{F}_p$ so that we may equate $\pi_1^{a_1}\cdots\pi_m^{a_m}$ with $[a_1,\dots,a_m]$ a 'vector' in $\mathbb{F}_p^m$. As the group $\mathcal{S}$ permutes the $\pi_i$ amongst themselves, then we may identify it with permutations $\alpha\in S_m$ acting by coordinate shift on the vectors $\ahat=[a_1,\dots,a_m]$ and $u\in U_p$ acts by scalar multiplication. As such, we may represent a typical element of $Norm_B(\mathcal{P})$ by a triple $(\hat{a},u^r,\alpha)$ where $\ahat\in\mathbb{F}_p^m$, $u\in U_p$ and $\alpha\in S_m$ where (as a permutation) $(\ahat,u^r,\alpha)(\pi_i^k(\gamma_i))=\pi_{\alpha(i)}^{ku^r+a_{\alpha(i)}}(\gamma_{\alpha(i)})$ and where multiplication (and resulting conjugation operations) is defined by 
\begin{align*}
(\ahat,u^r,\alpha)(\bhat,u^s,\beta)&=(\ahat+u^s\alpha(\bhat),u^{r+s},\alpha\beta)\\
(\bhat,u^s,\beta)(\ahat,u^r,\alpha)&(\bhat,u^s,\beta)^{-1}=\\
       &(\bhat+u^s\beta(\ahat)-u^r(\beta\alpha\beta^{-1})(\bhat),u^{r},\beta\alpha\beta^{-1})\\
(\ahat,u^r,\alpha)^n&=(\sum_{t=0}^{n-1}u^{rt}\alpha^t(\ahat),u^{rn},\alpha^n)\\
\end{align*}
and where the elements of $\mathcal{V}$ correspond to tuples of the form $(\vhat,1,I)$ where $I$ is the identity of $S_m$, in particular $\pi=\pi_1\pi_2\cdots\pi_m=([1,1,\dots,1],1,I)$. 
The elements of $Cent_B(\mathcal{P})$ correspond to those tuples where $r=0$, (i.e. the middle coordinate is 1) which leads us back to the discussion of $P(N)$ for $N$ a regular subgroup of $B$ normalized by $\lambda(\Gamma)$. In this situation we have $P(N)=\langle\theta\rangle$ where $\theta=(\ahat,1,\alpha)$ is fixed point free and order $p$. As such, if $P(N)\not\leq \mathcal{V}$ then $\alpha\neq I$ and so $\alpha$ has order $p$ which, since $\alpha\in S_m$ for $gcd(m,p)=1$ means that $\alpha$ has fixed points in $\{1,\dots,m\}$. If $\alpha(i)=i$ then $\theta(\pi_i^k(\gamma_i))=\pi_{\alpha(i)}^{k+a_{\alpha(i)}}(\gamma_{\alpha(i)})=\pi_i^{k+a_i}(\gamma_i)$ which means that $a_i\neq 0$ and more importantly that $\theta$ restricted to ${\Pi_i}$ equals $\pi_i^{a_i}$ (i.e. $\pi_i|\theta$). And for those $j$ {\it not} fixed by $\alpha$ that $\theta$ restricted to ${\Pi_j}$ is not a power of $\pi_j$ (i.e. $\pi_j\nmid \theta$). That is, $\theta=\theta_1\theta_2\cdots\theta_m$ where $\theta_i=\pi_i^{a_i}$ only for those $i$ fixed by $\alpha$ of which there must be at least one. But since $gcd(p,m)=1$ not all will be fixed and so at least one $\theta_j\not\in\mathcal{V}$. The example given above for $S_{40}$ is an instance of this, in particular the fixed point free element of order $5$ is $([1,1,1,1,1,1,1,1],1,(1,2,3,4,5))\in Cent_{S_{40}}(\langle\pi_1\pi_2\cdots\pi_8)\rangle$.\par
Now, the requirement that $N$ be normalized by $\lambda(\Gamma)$ together with the fact that $P(N)$ is characteristic means that $P(N)$ is normalized by $\lambda(\Gamma)$. The upshot of this is the following recapitulation of \cite[Prop 1.2]{Kohl2013}.
\begin{proposition}
\label{semiregular}
For $N$ a regular subgroup of $B$ normalized by $\lambda(\Gamma)$ if $P(N)$ is the $p$-Sylow subgroup of $N$ then $P(N)$ is a semi-regular subgroup of $\mathcal{V}=\langle \pi_1,\pi_2,\dots,\pi_m\rangle$. That is $P(N)=\langle\pi_1^{a_1}\cdots\pi_m^{a_m}\rangle$ where each $a_i\in U_p=\mathbb{F}_p^{\times}$.
\end{proposition}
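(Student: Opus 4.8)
The plan is to argue by contradiction. By the analysis preceding the statement, $P(N)=\langle\theta\rangle$ with $\theta=(\hat{a},1,\alpha)$ fixed point free of order $p$, and if $P(N)\not\leq\mathcal{V}$ then $\alpha\neq I$, hence $\alpha$ has order $p$ in $S_m$. Since $gcd(p,m)=1$ we have $p\nmid m$, so $\alpha$ fixes a set $F$ with $\emptyset\neq F\subsetneq\{1,\dots,m\}$: indeed $F\neq\emptyset$ because a permutation of order $p$ with no fixed points is a product of disjoint $p$-cycles and would force $p\mid m$, while $F\neq\{1,\dots,m\}$ since $\alpha\neq I$. As already observed, for $i\in F$ one has $\theta|_{\Pi_i}=\pi_i^{a_i}$ with $a_i\in U_p$, so the $\langle\theta\rangle$-orbit of any $x\in\Pi_i$ is all of $\Pi_i$; while for $j\notin F$ the permutation $\theta$ carries $\Pi_j$ into the distinct block $\Pi_{\alpha(j)}$, so the $\langle\theta\rangle$-orbit of any $x\in\Pi_j$ meets more than one block.

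The heart of the argument is to recognize the set $S:=\bigcup_{i\in F}\Pi_i$ through an intrinsic description that is visibly conjugation-stable. I claim that $x\in S$ if and only if the $\langle\theta\rangle$-orbit of $x$ is contained in a single $\mathcal{P}$-orbit, the $\mathcal{P}$-orbits being precisely the blocks $\Pi_1,\dots,\Pi_m$; both directions follow from the previous paragraph, since orbits of points in fixed blocks are whole blocks and orbits of points in non-fixed blocks straddle several blocks. Now $\mathcal{P}$ is normal in $\lambda(\Gamma)$ (being its unique $p$-Sylow subgroup) and $\langle\theta\rangle=P(N)$ is characteristic in $N$, which is normalized by $\lambda(\Gamma)$; hence for every $g\in\Gamma$, conjugation by $\lambda(g)$ permutes the $\mathcal{P}$-orbits amongst themselves, permutes the $\langle\theta\rangle$-orbits amongst themselves, and carries the $\langle\theta\rangle$-orbit of $x$ to the $\langle\theta\rangle$-orbit of $\lambda(g)(x)$. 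Therefore the defining property of $S$ is preserved by each $\lambda(g)$, so $S$ is $\lambda(\Gamma)$-invariant.

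Since $\lambda(\Gamma)$ acts transitively on $\Gamma$ it has no proper nonempty invariant subset, yet $0<|S|=|F|\,p<mp=|\Gamma|$, a contradiction. Hence $\alpha=I$, so $\theta\in\mathcal{V}$, say $\theta=\pi_1^{a_1}\cdots\pi_m^{a_m}$ with $a_i\in\mathbb{F}_p$. Finally, were some $a_i=0$ then $\theta$ would fix every point of $\Pi_i$, contradicting that $\theta$ is fixed point free; so every $a_i\in U_p$ and $P(N)=\langle\pi_1^{a_1}\cdots\pi_m^{a_m}\rangle$ is a semiregular subgroup of $\mathcal{V}$, as claimed.

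I expect the main obstacle to be pinning down the intrinsic characterization of $S$ cleanly — in particular verifying that the $\langle\theta\rangle$-orbit of a point lying in a non-fixed block genuinely fails to fit inside any single block $\Pi_k$, and that conjugation by $\lambda(\Gamma)$ respects both block systems at once, so that the invariance of $S$ is truly automatic. The remaining steps — the fixed-point count for $\alpha$, the transitivity contradiction, and the nonvanishing of the $a_i$ — are routine.
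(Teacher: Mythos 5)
Your proof is correct, and it takes a genuinely different route from the paper's, though the underlying mechanism is the same. The paper argues on the factorization $\theta=\theta_1\cdots\theta_m$ directly: it invokes the fact (via \cite{Kohl2013}, Prop.~3.8) that the projection $t(\mathcal{Q})$ of the complement onto the block-permutation coordinate is a regular, hence transitive, subgroup of $S_m$, picks $\mathfrak{g}=(\hat{b},u^s,\beta)\in\mathcal{Q}$ with $\beta(i)=j$ where $\pi_i\mid\theta$ but $\pi_j\nmid\theta$, and computes the conjugation explicitly in the $(\hat{a},u^r,\alpha)$ coordinates to get $\pi_j\mid\mathfrak{g}\theta\mathfrak{g}^{-1}=\theta^e$, contradicting $\pi_j\nmid\theta$. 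You instead aggregate the ``divisible'' blocks into $S=\bigcup_{i\in F}\Pi_i$, characterize $S$ intrinsically as the set of points whose $\langle\theta\rangle$-orbit lies inside a single $\mathcal{P}$-orbit, and observe that conjugation by $\lambda(\Gamma)$ respects both orbit systems (since $\mathcal{P}$ is the normal $p$-Sylow subgroup of $\lambda(\Gamma)$ and $P(N)$ is characteristic in the normalized $N$), so that $S$ is a nonempty proper $\lambda(\Gamma)$-invariant subset of $\Gamma$, contradicting transitivity of the regular representation. Both proofs exploit the same phenomenon---conjugation by $\lambda(\Gamma)$ carries blocks on which $\theta$ restricts to a power of $\pi_i$ onto blocks on which it does not---but your packaging needs only point-transitivity of $\lambda(\Gamma)$ and normality of $\mathcal{P}$, so it avoids both the citation of the regularity of $t(\mathcal{Q})$ and the explicit wreath-product multiplication formulas; the paper's computation stays within the coordinate formalism it has set up and leans on facts about $\mathcal{Q}$ that are reused elsewhere in the program. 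Your endgame (fixed points of $\alpha$ exist because $\gcd(p,m)=1$ and $\alpha\neq I$, and all $a_i\neq 0$ by fixed-point-freeness of $\theta$) agrees with the paper's preceding discussion.
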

\begin{proof}
If $P(N)=\langle \theta\rangle$ is not a subgroup of $\mathcal{V}$ then as shown above $\theta=\theta_1\theta_2\cdots\theta_m$ where for some $i$, $\theta_i=\pi_i^{a_i}$ (i.e. $\pi_i |\theta$) and for some $j\neq i$, $\pi_j \nmid\theta$. Now, as $\lambda(\Gamma)$ certainly normalizes $\mathcal{P}=\langle\pi_1\cdots\pi_m\rangle$, then since $\lambda(\Gamma)$ contains elements $(\bhat,u^s,\beta)$ of order coprime to $p$ and $\lambda(\Gamma)$ has no fixed points, then by \cite[Prop. 3.8]{Kohl2013} $\beta\in S_m$ is fixed point free. In fact, if $\mathcal{Q}$ is the complementary subgroup of $\mathcal{P}$ in $\lambda(\Gamma)$ and $t:(\bhat,u^s,\beta)\mapsto \beta$ is projection onto the permutation coordinate, then $t(\mathcal{Q})$ is a regular subgroup of $S_m$! As such we may pick an element $\mathfrak{g}=(\bhat,u^s,\beta)$ in $\mathcal{Q}$ such that $\beta(i)=j$  
and so $\mathfrak{g}([1,1,\dots,1],1,I)\mathfrak{g}^{-1}=(u^s[1,1,\dots,1],1,I)$ where, in particular 
$$
\mathfrak{g}\pi_1\pi_2\cdots\pi_m\mathfrak{g}^{-1}=\pi_{\beta(1)}^{u^s}\pi_{\beta(2)}^{u^s}\cdots\pi_{\beta(m)}^{u^s}
$$
And since $\mathfrak{g}(\theta_1\theta_2\cdots\theta_m)\mathfrak{g}^{-1}=(\mathfrak{g}\theta_1 \mathfrak{g}^{-1})(\mathfrak{g}\theta_2 \mathfrak{g}^{-1})\cdots(\mathfrak{g}\theta_m\mathfrak{g}^{-1})$ then $\mathfrak{g}\theta_i \mathfrak{g}^{-1}=\mathfrak{g}\pi_i^{a_i}\mathfrak{g}^{-1}=\pi_{\beta(i)}^{u^sa_i}=\pi_j^{u^sa_i}$. As such, $\pi_j | \mathfrak{g}\theta\mathfrak{g}^{-1}$. The problem now is that $\mathfrak{g}\theta\mathfrak{g}^{-1}=\theta^e$ for some $e\in U_p$ implies that $\pi_j | \theta$ contrary to the assumption that $\pi_j\nmid \theta$. We therefore conclude that any such $\theta$ must be a fixed point free subgroup of $\mathcal{V}$ and therefore be of the form asserted in the statement of the proposition.
\end{proof}
With \ref{Aut} and \ref{semiregular} in place, the rest of the program, in particular the characterization of the possibilities for $P(N)$ determined by linear characters $\mathcal{Q}\rightarrow\mathbb{F}_p^{\times}$ (\cite[Theorem 2.1]{Kohl2013} and \cite[Lemma 2.3]{Kohl2013}) all follow naturally. The main theorem,\ref{main} above, that all such $N$ are subgroups of $Norm_B(\mathcal{P})$ also follows without any modification. The reason for this is that none of these subsequent results require $p>m$, merely that $gcd(p,m)=1$ and that groups of order $mp$ have unique $p$-Sylow subgroup and that $p$ not divide the automorphism group of the complementary subgroup of order $m$. To be slightly formal, if $n_p$ denotes the number of $p$-Sylow subgroups of a group, we define the following subsets of $\mathbb{N}\times\mathbb{N}$
\begin{align*}
F_{Q}&=\{(p,m)\ |\ p\text{ prime },gcd(p,m)=1, p\nmid |Aut(Q)|\text{ for all groups $Q$ of order $m$}\} \\
F_{S}&=\{(p,m)\ |\ p\text{ prime },gcd(p,m)=1, n_p=1\text{ for all groups of order $mp$}\}
\end{align*}
As such, the program in \cite{Kohl2013} for enumerating Hopf-Galois structures on Galois extensions of order $mp$ may be used for those $(p,m)\in F_{Q}\cap F_{S}$. As in \cite{Kohl2013}, $(p,m)\in F_Q\cap F_S$ for $p$ prime when $p>m$, but we want to now consider other $p$ and $m$. The case of $p=5$ and $m=8$ as indicated already is one such example. In lieu of working out the enumeration of all the $14^2$ possible pairings $R(\Gamma,[N])$ for order 40 we shall instead conclude with an overview of some the choices for $|\Gamma|=|N|=n=pm$ which 'force' $(p,m)\in F_Q\cap F_S$. Such forcing conditions have appeared in group theory literature including recent examples such as \cite{Nilpotent2000}. Our example will be somewhat more narrow, but still in this same spirit.
\section{Groups of order a product of three primes}
If $p_1<p_2<p_3$ are primes then it is a standard exercise in many group theory textbooks to show that at least one of $n_{p_2}$, $n_{p_3}$ must be 1 where the $n_{p_i}$ denote the number of $p_i$-Sylow subgroups. As such there is guaranteed to be a unique Sylow subgroup of order $p_i$, (i.e. $(p_i,p_jp_k)\in F_S$). As such the complementary subgroup $Q$ is either a cyclic or metacyclic group of order $p_jp_k$ for $p_j < p_k$. This does not preclude the possibility of course of $n_{p_1}=1$. The question then is whether $(p_i,p_jp_k)\in F_Q$ as well. Let us examine the three possible cases depending on which Sylow number is 1. 
\begin{align*}
n_{p_1}&=1\rightarrow  |Aut(Q)|=\begin{cases} (p_2-1)(p_3-1) \ Q\text{ abelian } \\ p_3(p_3-1) \ Q\text{ non-abelian }\end{cases} \\
n_{p_2}&=1\rightarrow  |Aut(Q)|=\begin{cases} (p_1-1)(p_3-1) \ Q\text{ abelian } \\ p_3(p_3-1) \ Q\text{ non-abelian }\end{cases} \\
n_{p_3}&=1\rightarrow  |Aut(Q)|=\begin{cases} (p_1-1)(p_2-1) \ Q\text{ abelian } \\ p_2(p_2-1) \ Q\text{ non-abelian }\end{cases} \\
\end{align*}
So when $n_{p_3}=1$ then $p_3 \nmid |Aut(Q)|$ regardless of whether $Q$ is abelian or not. For the other two possibilities, further restrictions on the choices of the $p_i$ are needed since $|Aut(Q)|$ can indeed be divisible by that $p$ such that $n_p=1$. Also, it is not impossible that two or more of the $n_{p_i}$ may be 1 simultaneously. We can test various tuples of primes $(p_1,p_2,p_3)$ using the naive congruence conditions from Sylow theory that force one (or more) of the $n_{p_i}$ to be 1, and also the above orders of potential $Aut(Q)$, where it's necessary and sufficient to avoid having $n_{p_i}\equiv 1(mod\ p_k)$ in order for $(p_i,p_jp_k)\in F_Q$ regardless of whether a non-abelian $Q$ of order $p_jp_k$ exists. 

 We include a sample of these tuples (in dictionary order), including, most notably, those where $p<m$. We also include repeats of those triples where more than one $n_{p_i}=1$ is forced by congruence conditions. \par
\begin{tabular}{ll}
\begin{tabular}{|c|c|c|||c|c|c|c|c|}\hline
$p_1$ & $p_2$ & $p_3$ & $p$ & $m$ & $mp$ & $p<m ?$ \\ \hline
2 & 3 & 7 & 7 & 6 & 42 &  \\ \hline 
2 & 3 & 11 & 11 & 6 & 66 &  \\ \hline 
2 & 3 & 13 & 13 & 6 & 78 &  \\ \hline 
2 & 3 & 17 & 17 & 6 & 102 &  \\ \hline 
2 & 3 & 19 & 19 & 6 & 114 &  \\ \hline 
2 & 3 & 23 & 23 & 6 & 138 &  \\ \hline 
2 & 3 & 29 & 29 & 6 & 174 &  \\ \hline 
2 & 5 & 7 & 5 & 14 & 70 & *  \\ \hline 
2 & 5 & 7 & 7 & 10 & 70 & *  \\ \hline 
2 & 5 & 11 & 11 & 10 & 110 &  \\ \hline 
2 & 5 & 13 & 13 & 10 & 130 &  \\ \hline 
2 & 5 & 17 & 5 & 34 & 170 & *  \\ \hline 
2 & 5 & 17 & 17 & 10 & 170 &  \\ \hline 
2 & 5 & 19 & 5 & 38 & 190 & *  \\ \hline 
2 & 5 & 19 & 19 & 10 & 190 &  \\ \hline 
2 & 5 & 23 & 23 & 10 & 230 &  \\ \hline 
2 & 5 & 29 & 5 & 58 & 290 & *  \\ \hline 
2 & 5 & 29 & 29 & 10 & 290 &  \\ \hline 
2 & 7 & 11 & 11 & 14 & 154 & *  \\ \hline 
2 & 7 & 13 & 7 & 26 & 182 & *  \\ \hline 
2 & 7 & 17 & 7 & 34 & 238 & *  \\ \hline 
2 & 7 & 17 & 17 & 14 & 238 &  \\ \hline 
2 & 7 & 19 & 7 & 38 & 266 & *  \\ \hline 
2 & 7 & 19 & 19 & 14 & 266 &  \\ \hline 
2 & 7 & 23 & 7 & 46 & 322 & *  \\ \hline 
2 & 7 & 23 & 23 & 14 & 322 &  \\ \hline 
2 & 7 & 29 & 29 & 14 & 406 &  \\ \hline 
2 & 11 & 13 & 11 & 26 & 286 & *  \\ \hline 
2 & 11 & 13 & 13 & 22 & 286 & *  \\ \hline 
2 & 11 & 17 & 17 & 22 & 374 & *  \\ \hline 
\end{tabular}
&
\begin{tabular}{|c|c|c|||c|c|c|c|c|}\hline
$p_1$ & $p_2$ & $p_3$ & $p$ & $m$ & $mp$ & $p<m ?$ \\ \hline
2 & 11 & 19 & 11 & 38 & 418 & *  \\ \hline 
2 & 11 & 19 & 19 & 22 & 418 & *  \\ \hline 
2 & 11 & 23 & 23 & 22 & 506 &  \\ \hline 
2 & 11 & 29 & 11 & 58 & 638 & *  \\ \hline 
2 & 11 & 29 & 29 & 22 & 638 &  \\ \hline 
2 & 13 & 17 & 13 & 34 & 442 & *  \\ \hline 
2 & 13 & 17 & 17 & 26 & 442 & *  \\ \hline 
2 & 13 & 19 & 13 & 38 & 494 & *  \\ \hline 
2 & 13 & 19 & 19 & 26 & 494 & *  \\ \hline 
2 & 13 & 23 & 13 & 46 & 598 & *  \\ \hline 
2 & 13 & 23 & 23 & 26 & 598 & *  \\ \hline 
2 & 13 & 29 & 13 & 58 & 754 & *  \\ \hline 
2 & 13 & 29 & 29 & 26 & 754 &  \\ \hline 
2 & 17 & 19 & 17 & 38 & 646 & *  \\ \hline 
2 & 17 & 19 & 19 & 34 & 646 & *  \\ \hline 
2 & 17 & 23 & 17 & 46 & 782 & *  \\ \hline 
2 & 17 & 23 & 23 & 34 & 782 & *  \\ \hline 
2 & 17 & 29 & 17 & 58 & 986 & *  \\ \hline 
2 & 17 & 29 & 29 & 34 & 986 & *  \\ \hline 
2 & 19 & 23 & 19 & 46 & 874 & *  \\ \hline 
2 & 19 & 23 & 23 & 38 & 874 & *  \\ \hline 
2 & 19 & 29 & 29 & 38 & 1102 & *  \\ \hline 
2 & 23 & 29 & 23 & 58 & 1334 & *  \\ \hline 
2 & 23 & 29 & 29 & 46 & 1334 & *  \\ \hline 
3 & 5 & 11 & 11 & 15 & 165 & *  \\ \hline 
3 & 5 & 13 & 5 & 39 & 195 & *  \\ \hline 
3 & 5 & 13 & 13 & 15 & 195 & *  \\ \hline 
3 & 5 & 17 & 17 & 15 & 255 &  \\ \hline 
3 & 5 & 19 & 5 & 57 & 285 & *  \\ \hline 
3 & 5 & 19 & 19 & 15 & 285 &  \\ \hline 
\end{tabular}
\end{tabular}\par
\noindent Before going further, we note two tuples not in this list, namely $(2,3,5)$ and $(3,5,7)$. Both lie in $F_S\cap F_Q$ but are not forced to lie in $F_S$ by the basic congruence condition $n_p\equiv 1(mod\ p)$ but rather by being three distinct primes as mentioned above. What's interesting is that these are the only two in this sorted list which require one to fall back on the three primes condition. If one wanted to, one could use something like the {\tt AllSmallGroups} library together with the {\tt AutomorphismGroup} function in GAP (\cite{GAP4}) to determine which tuples give rise to $(p,m)\in F_{Q}\cap F_{S}$ by brute force checking of known small groups of order $p_1p_2p_3$.\par
As can be seen, there are quite a number of examples where $(p,m)\in F_{Q}\cap F_{S}$ and $(p',m')\in F_{Q}\cap F_{S}$ where $mp=m'p'$ for which both $p<m$ and $p'<m'$ or where $p>m$ but $p'<m'$. What is also interesting to note about these repeats is that the whole program in \cite{Kohl2013} can computed based on either choice of $p$, the difference being the nature of the resulting groups $Q$ as well as the ambient $Norm_B(\mathcal{P})$ containing all the elements of $R(\Gamma_i,[\Gamma_j])$. For many of the examples listed in the table for these duplicates, the number of groups of order $m$ and $m'$ are the same, however this is not always the case. Consider $(p,m)=(5,39)$ versus $(p',m')=(13,15)$ and that there are 2 groups of order $39$ but only 1 of order 15. As such, for groups of order 195, the application of the program is different for $(13,15)$ since one only needs to work with one complementary group $Q$. Of course there are only $2$ groups of order 195 anyway, but it seems likely that for other $|\Gamma|=pm$ (not necessarily a product of three primes) for multiple $(p,m)\in F_Q\cap F_S$ that one may choose that $m$ such that the number of groups of order $m$ is minimal. Of course, for larger $m$, $Norm_B(\mathcal{P})$ will be larger since its order is $p^m\phi(p)m!$. However the size of $Norm_B(\mathcal{P})$ is not an obstruction in the implementation of the program in \cite{Kohl2013} except if one wanted to do a naive 'search' of $Norm_B(\mathcal{P})$ for $N\in R(\Gamma_i,[\Gamma_j])$ in which case the size of this wreath product would merit consideration.
\par
\bibliography{mp2}
\bibliographystyle{plain}
\end{document}